\begin{document}
 
\newtheorem{lemma}{Lemma}[section]
\newtheorem{prop}[lemma]{Proposition}
\newtheorem{cor}[lemma]{Corollary}
  
\newtheorem{thm}[lemma]{Theorem}
\newtheorem{Mthm}[lemma]{Main Theorem}
\newtheorem{con}{Conjecture}
\newtheorem{claim}{Claim}
\newtheorem{ques}{Question}

\theoremstyle{definition}
  
\newtheorem{rem}[lemma]{Remark}
\newtheorem{rems}[lemma]{Remarks}
\newtheorem{defi}[lemma]{Definition}
\newtheorem{ex}[lemma]{Example}
                                                                                
\newcommand{\C}{\mathbb C}
\newcommand{\R}{\mathbb R}
\newcommand{\Q}{\mathbb Q}
\newcommand{\Z}{\mathbb Z}
\newcommand{\N}{\mathbb N}

\title[Cryptosystems using subgroup distortion]{Cryptosystems using subgroup distortion}

\author[I. Chatterji]{Indira Chatterji}
\address{Indira Chatterji,  Laboratoire J.A. Dieudonn\'e de l'Universit\'e de Nice, France}
\email{indira.chatterji@math.cnrs.fr}

\author[D. Kahrobaei]{Delaram Kahrobaei}
\address{Delaram Kahrobaei, CUNY Graduate Center, PhD Program in Computer Science and NYCCT, Mathematics Department, City University of New York, New York University, Department of Computer Science and Engineering}
\email{dkahrobaei@gc.cuny.edu}

\author[N. Y. Lu]{Ni Yen Lu}
\address{Ni Yen Lu, CUNY Graduate Center, City University of New York}
\email{nlu@gradcenter.cuny.edu}

\maketitle
\begin{abstract}
In this paper we propose cryptosystems based on subgroup distortion in hyperbolic groups. We also include concrete examples of hyperbolic groups as possible platforms.
\end{abstract}
\tableofcontents
\section{Introduction}
Using algorithmic problems in non-commutative groups for cryptography is a fairly new but very active field for over a decade (see for instance \cite{MSU2}). In this paper we propose new cryptosystems using subgroup distortion. The algorithmic problems which are proposed for non-commutative group-based cryptography so far are: {\it Conjugacy Search Problem, Endomorphism Search Problem, Word Choice problem}, {\it Membership search problem} and {\it Twisted Conjugacy Problem} among others. There has not been yet any proposal to use the {\it Geodesic Length Problem} or {\it Complexity of Distortion in Subgroups} as we do in this paper. We propose a couple of symmetric cryptosystems based on these problems, and analyze their security.

The paper is organized as follows: in Section \ref{disto} we discuss the notion of subgroup distortion and in Section \ref{gl} we discuss the problem of finding the geodesic length of an element in a group in polynomial time, and explain how in a Gromov hyperbolic group this can be done in polynomial time. In Section \ref{crypto} we explain two possible protocols based on subgroup distortion, and in Section \ref{cex} we give a few concrete examples of hyperbolic groups that can be used as platforms for the cryptosystems described in Section \ref{crypto}.
\section{Basics group theory facts}
\subsection{Subgroup distortion}\label{disto} Let $G$ be a finitely generated group and $S\subseteq G$ a finite generating set. Then for $g\in G$ the {\it word length associated to }$S$ is given by
$$\ell_S(g)=\min\{n\in\N | g=s_1\dots s_n, s_i\in S\cup S^{-1}\}$$
For any two finite generating sets $S,S'$ of $G$, there is a constant $C\geq 1$ such that, for any $g\in G$ one has
$$\ell_S(g)\leq C\ell_{S'}(g).$$
For $H<G$ a finitely generated subgroup, if $T\subset H$ is a generating set, then for any $h\in H$
$$\ell_{S\cup T}(h)\leq\ell_T(h).$$
Indeed, there are ``shortcuts" to the identity when one is allowed to use both elements from the generating set from $G$ and $H$. Those shortcuts may no longer be there when we are restricted to the generating set of $H$ and hence the other inequality is in general not true. By how much this other inequality fails is how one defines the distortion. In the rest of the paper we will assume that $T\subseteq S$, so that $S\cup T=S$.
\begin{defi}
Let $G$ be a finitely generated group and $H<G$ be a finitely generated subgroup. The {\it distortion} of $H$ in $G$ is a function
\begin{eqnarray*}{\tt Dist}^G_H:\N&\to&\N\\
n&\mapsto&\max\{\ell_T(h)\,|\,\ell_S(h)\leq n\}\end{eqnarray*}
Notice that a priori this function depends on the generating sets $S$ and $T$ for $G$ and $H$, but two finite generating sets will give equivalent distortion functions, that is functions that differ by multiplicative constants.
\end{defi}
The following are very natural examples of finitely generated groups with distorted subgroups.
\begin{ex}The metabelian {\it Baumslag-Solitar group}:
$$G=BS(1,2)= \langle a,b\,|\,aba^{-1}=b^2 \rangle$$
If we take $H\simeq\Z=\langle b \rangle$, then one checks that for any $n\in\N$ one has
\begin{eqnarray*}a^nba^{-n}&=&a^{n-1}b^2a^{-(n-1)}=a^{n-1}ba^{-1}aba^{-(n-1)}=a^{n-2}b^4a^{-(n-2)}\\
&=&\dots=b^{2^n}\end{eqnarray*}
Hence $\ell_{\{b\}}(b^{2^n})=2^n$ whereas $\ell_{\{a,b\}}(b^{2^n})=2n+1$ so that ${\tt Dist}^G_H$ is at least an exponential.
\end{ex}
\begin{ex}The integer {\it Heisenberg group}, given by
$$G=H_\Z= \langle a,b,c\,|\,[a,c]=[b,c]=e, [a,b]=c \rangle$$
If we take $H\simeq\Z= \langle c \rangle$, this is the center of $G$ and then one checks that for any $n\in\N$, using that $ab=cba$ and that $a^{-1}b^{-1}=cb^{-1}a^{-1}$ one has
\begin{eqnarray*}a^nb^na^{-n}b^{-n}&=&a^{n-1}abb^{n-1}a^{-n}b^{-n}=a^{n-1}cbab^{n-1}a^{-n}b^{-n}\\
&=&ca^{n-1}bab^{n-1}a^{-n}b^{-n}=ca^{n-1}b^2cab^{n-2}a^{-n}b^{-n}\\
&=&c^2a^{n-1}b^2ab^{n-2}a^{-n}b^{-n}=\dots=c^na^{n-1}b^na^{-n+1}b^{-n}\\
&=&\dots=c^{2n}a^{n-2}b^na^{-n+2}b^{-n}=\dots=c^{n^2}b^nb^{-n}=c^{n^2}\end{eqnarray*}
And hence $\ell_{\{c\}}(c^{n^2})=n^2$ whereas $\ell_{\{a,b,c\}}(c^{n^2})=4n$ so that ${\tt Dist}^G_H$ is at least a polynomial.
\end{ex}
\subsection{The Geodesic Length Problem}\label{gl}
Given a finitely generated group $G$ and a finite generating set $S$, one can ask the following.
\begin{ques}[Geodesic Length Problem]
What is the complexity of the algorithm that given $g\in G$, finds the geodesic length $\ell_S(g)$?
\end{ques}
This question seems hard in general, and not much studied. In \cite{MRUV} it is shown that this problem is NP-complete in the free-metabelian group $S_{r,2}$. It is also known that in free groups or Right Angled Artin groups given by standard generating sets, there are fast algorithms for computing the geodesic length of elements \cite{MSU2}. In braid groups, or nilpotent groups, the computation of the geodesic length of elements is hard \cite{MSU2}.\\

There are many groups of exponential growth where the Geodesic Problem is decidable in polynomial time, for example, hyperbolic groups \cite{Ep} or metabelian Baumslag-Solitar group BS(1,n), \cite{Elder}. Notice that, a priori, the Geodesic Problem is a bit harder than the Geodesic Length Problem: indeed, once one has found a geodesic, one automatically has its length, but knowing the length of a geodesic doesn't give the geodesic. However, according to Elder and Rechnitzer in \cite{ER}, those two problems are polynomially reducible to each other, meaning that a polynomial time solution to one of the problems is equivalent to a polynomial time solution to the other one.

\bigskip

In the case where $G$ is hyperbolic in the sense of Gromov, the following is easy:
\begin{thm}[Epstein et al \cite{Ep}]\label{pol}Let $G$ be a Gromov hyperbolic group, then the Geodesic Problem (hence the Geodesic Length Problem as well) is solvable in polynomial time.\end{thm}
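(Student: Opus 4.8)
The plan is to build, for a Gromov hyperbolic group $G$ with finite generating set $S$, the automatic structure of $G$ and to use the fact that the language of geodesics (or of shortlex-minimal representatives) is regular, i.e.\ recognized by a finite state automaton whose size depends only on $G$ and $S$. Concretely, I would first invoke the theorem of Epstein et al.\ that every hyperbolic group is \emph{strongly geodesically automatic}: the full set of geodesic words over $S\cup S^{-1}$ forms a regular language $L_{\mathrm{geo}}$, and there is a finite state automaton $M$ accepting exactly $L_{\mathrm{geo}}$. The construction of $M$ rests on the \emph{thin triangles} (or the linear isoperimetric) condition together with Dehn's algorithm, and the key quantitative input is that a word fails to be geodesic if and only if it contains a subword that can be shortened using one of the finitely many defining-relator shortenings detectable within a bounded window determined by the hyperbolicity constant $\delta$.

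Given $M$, the geodesic length algorithm is then a shortest-path computation. First I would present the input $g$ as some word $w$ over $S\cup S^{-1}$ of length $m$; this is the size of the problem. Running a breadth-first search on the Cayley graph is not directly polynomial because the ball of radius $m$ has exponentially many vertices, so instead I would use the automaton to prune the search. The cleanest route is: use Dehn's algorithm to reduce $w$ to a shortlex-minimal (hence geodesic) word $w'$ representing the same element, repeatedly replacing any occurrence of the longer half of a relator (among the finitely many relators of length at most $2\delta+1$, say) by its shorter complement. Each such replacement strictly decreases the length or the shortlex rank, there are at most $O(m)$ length-reductions, each reduction scans the current word in linear time for an applicable relator from a fixed finite list, and the total running time is therefore polynomial (a naive bound gives $O(m^2)$, with sharper analyses giving near-linear time). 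The output geodesic $w'$ has length $\ell_S(g)$, so one reads off the geodesic length immediately; and since, as noted before the statement, the Geodesic Problem and the Geodesic Length Problem are polynomially interreducible, solving either yields the other.

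The main obstacle I anticipate is not the search itself but justifying that Dehn's algorithm actually terminates at a geodesic in a hyperbolic group, and that the finite list of relator-shortenings is correct and computable from the presentation. This is precisely the content of Dehn's algorithm for hyperbolic groups: one must verify that the \emph{Dehn presentation} exists, i.e.\ that every nontrivial word representing the identity contains more than half of some defining relator, which is where the thin-triangles/linear-isoperimetric hypothesis enters and where the constant $\delta$ determines the window size. Once this structural fact is in hand, correctness of the reduction (that a word admitting no Dehn shortening is geodesic) and the polynomial bound on the number of reductions both follow from the strict decrease of the shortlex order under each step.

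Thus the proof reduces to two cited ingredients — regularity of the geodesic language and the Dehn algorithm, both from \cite{Ep} — assembled into a polynomial-time reduction procedure; the only real work is bookkeeping the complexity of iterating Dehn shortenings and confirming that the relevant automaton and relator list are finite and effectively constructible from $(G,S)$.
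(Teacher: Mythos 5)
There is a genuine gap, and it sits exactly at the step you flag as the ``main obstacle'' and then wave through. You assert that a word admitting no Dehn shortening is geodesic, and more strongly that ``a word fails to be geodesic if and only if it contains a subword that can be shortened \ldots within a bounded window.'' Only the trivial direction of that equivalence holds. What Dehn's algorithm for hyperbolic groups actually gives (the Dehn presentation, \cite{BH} III.$\Gamma$.2) is that every nonempty word representing the \emph{identity} contains more than half of some relator; this solves the word problem, not the geodesic problem. A word irreducible under your moves is merely a $k$-local geodesic, and in a hyperbolic group $k$-local geodesics (for $k>8\delta$) are in general \emph{not} geodesics: they are only quasi-geodesics lying in a bounded neighborhood of a true geodesic, and nonelementary hyperbolic groups contain arbitrarily long non-geodesic local geodesics. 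Adding the shortlex tie-break does not rescue this: no bounded-window rewriting scheme is known to terminate at geodesics in an arbitrary hyperbolic group (that would amount to a finite convergent geodesic rewriting system, which is not available in general). Nor does the regular language $L_{\mathrm{geo}}$ you invoke fill the hole: the word acceptor only certifies whether a \emph{given} word is geodesic; since it carries no information about which group element a word represents, it cannot by itself produce a geodesic representative of $g$, and your algorithm as written never actually uses it.

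The missing step is precisely where the paper's one-line proof (and Tim Riley's remark after it) does the real work: after passing to a normal form or local geodesic one has only a \emph{quasi-geodesic}, and one must then invoke the Morse lemma --- any geodesic with the same endpoints stays in a $C(\delta)$-neighborhood of the quasi-geodesic --- and search that bounded-width neighborhood (polynomially many candidates per step, e.g.\ by dynamic programming along the quasi-geodesic using the fellow-traveler property) to locate the true geodesic and read off $\ell_S(g)$. Alternatively, your automatic-structure opening can be completed correctly without Dehn's algorithm: hyperbolic groups are shortlex automatic \cite{Ep}, the multiplier automata let one compute the shortlex normal form of a word of length $m$ in time $O(m^2)$, and the shortlex normal form is geodesic. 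Either repair yields the theorem; as written, your Dehn-reduction endgame can halt at a local geodesic strictly longer than $\ell_S(g)$ and output the wrong length.
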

\begin{proof} According to \cite{BH} (Part III.$\Gamma$.2), in a Gromov hyperbolic group a word has a normal form which is a quasi-geodesic, so one can check by hand in a neighborhood of this quasi-geodesic to find the geodesic length, and the neighborhood of a quasi-geodesic is the same as a neighborhood of a geodesic according to Morse lemma.\end{proof}
Tim Riley's alternative argument is that for hyperbolic groups, one can transform an arbitrary path into a local (for the right definition of local) geodesic in polynomial time and then conclude it is a geodesic.
\subsection{The membership search problem}\label{member} Given $G$ a finitely generated group, with a finite generating set $S$ and $H$ a subgroup with its own generating set $T$, one can ask the following.
\begin{ques}[Membership search problem] Given $h\in H$ expressed in terms of the generating set $S$, how long is it needed to express $h$ in terms of elements of $T$?\end{ques}
The difficulty of the membership search problem has been used in cryptography by Shpilrain and Zapata in \cite{SZ}, but here we will be needing examples in which the membership search problem is polynomial, see Lemma \ref{pallavi}.
\section{The Cryptosystems Using Subgroup Distortion}\label{crypto}
\subsection{The protocol I: basic idea} Assume that Alice and Bob would like to communicate over an insecure channel. Here $G=\langle g_1, \cdots ,g_l |R \rangle$ is a public group and $H= \langle t_1,\cdots, t_s \rangle \subset\langle g_1, \cdots ,g_l  \rangle =G$ is a secret subgroup of $G$, that is distorted and shared between only between Alice and Bob. We further assume that the geodesic length problem is polynomial both in $G$ and $H$, and that the membership search problem is polynomial in $H$. Then: 
\begin{enumerate}
	\item Alice picks $h\in H$ with $\ell_H(h)=n$, expresses $h$ in terms of generators of $G$ with $\ell_G(h)=m \ll n$ and sends $h$ to Bob.
	\item Bob then converts $h$ back  in terms of generators of $H$ and computes $\ell_H(g)=n$ in polynomial time to recover $n$.
\end{enumerate}
\subsubsection{Security} Although $H$ is not known to anyone except to Alice and Bob and $h$ being sent with length $m\ll n$ gives infinitely many possible guesses for the eavesdropper Eve, the security of the scheme is weak since Eve will have intercepted enough elements of $H$ to generate $H$ (one can think of the group ${\bf Z}$ of the integers, it is enough to intercept two relatively prime integers to generate the whole group). 

\subsection{The protocol I: secure version}
We suggest making it impossible for Eve to tell which elements in the sent form belong to $H$ by sending along $h$ several elements that do not belong to $H$. To determine how Bob can tell which elements belong to $H$ to retrieve the correct message we will consider below the subgroup membership problem and the random number generator.
\subsubsection{Subgroup membership problem}
Suppose we have a group in which the subgroup membership problem is solved efficiently then we will send some random words and the receiver first checks whether each word belongs to $H$ and then computes its length.\\
Protocol:\\
Let $G = \langle g_1,\cdots,g_l \vert R_G\rangle$ be a group that is known to the public and $H= \langle h_1,\cdots,h_s \rangle $ be a secret subgroup of $G$ that is exponentially distorted. Assume that the subgroup membership problem in $G$ efficiently solvable, and that as in Protocol I the word problem is polynomial in $G$, the geodesic length problem and the membership problems are both polynomial in $H$. Then:
\begin{enumerate}
	\item Alice picks $h\in H$ with $\ell_H(h)=n$, expresses $h=g_1 \cdots g_{m}$ in terms of generators of $G$ with $\ell_G(h)=m \ll n$. She randomly generates $a_0,\dots,a_m \in G \setminus H$  and sends these words to Bob.
	\item Since Bob knows the generating set for $H$, he find $h \in H$ (since he could check the subgroup membership problem efficiently) he only uses $h\in H$, in terms of generators of $H$ and computes $\ell_H(g)=n$ in polynomial time according to our assumptions to recover $n$.
\end{enumerate}
\subsubsection{Random Number Generator} 
Suppose we have a random number generator and two parties that share the same random number generator and the same seed, they will get a same random sequence. We would like to use this idea but instead on groups.\\
This notion is possible if we are given a one-to-one correspondence between the countably infinite set of integers and a countably infinite group $G$. There is a natural ordering of elements in the group of integers and so we can impose this ordering on $G$. Generating $m$ random numbers is the same as generating $m$ elements in group $G$.\\
The advantage here is that given the same random number generator and the same seed, two parties would produce the same sequence of random numbers and by using the "same ordering" in $G$, they would get the same sequence of random elements of $G$.\\
According to Cayley's theorem, every group is isomorphic to a permutation group. There is a lexicographic ordering on permutation groups and hence there is a unique ordering of any group. There is a one-to-one correspondence between the group of integers and a countably infinitely group. 
We will use the idea of random number generator for group in the protocol below.\\
Protocol:\\
Let $G=D_1*D_2* \cdots *D_n$ where each $D_i$ is a Gromov hyperbolic group that is known to the public. Alice and Bob share $H= \langle d_,\cdots,d_s \rangle \subset D_1:=\langle d_1, \cdots ,d_l |R_1 \rangle $ which is an exponentially distorted hyperbolic subgroup of $D_1$ (and hence $G$), a random number generator, and a way to choose a seed. (For example, they could use the date and time for the seed : 02032016123342 where 02-03-2016 is today date and 12:33:42pm is the current time of message being sent. They could also add to this the number sent by previous message.) 
\begin{enumerate}
	\item Alice picks $h\in H$ with $\ell_H(h)=n$, expresses $h=d_1 \cdots d_{m}$ in terms of generators of $D_1$ with $\ell_G(h)=m \ll n$. She randomly generates a sequence of $(m+1)$ numbers from the random number generator and picks $a_0,\dots,a_m$ that belong to $D_2* \cdots *D_n$ that is in a one-to-one correspondence with the sequence of $(m+1)$ numbers.\footnote{According to Cayley's theorem, every group is isomorphic to a permutation group so we can use the lexicographic ordering of permutation group and then order the product $D_2*\cdots*D_m$ lexicographically.} She then sends $a_0d_1a_1d_2\dots d_ma_m$ to Bob (the $a_i$'s are expressed in a fixed generating set for $D_2* \cdots *D_n$).
	\item Bob knows the random number generator and the seed so he knows which $a_{i}$'s are sent along with $h$. He uses $a_i$'s inverses to get back $h=d_1 \cdots d_{m}$. Since he also knows $H$, he converts $h$ back in terms of generators of $H$ and computes $\ell_H(g)=n$ in polynomial time according to Theorem \ref{pol} to recover $n$.
\end{enumerate}

\subsubsection{Security} The security of the scheme relies on the fact that:
\begin{itemize}
	\item $H < G$ is not known to anyone except to Alice and Bob. 
	\item Since $h$ is sent with length $\ell_G(h)=m \ll n$, there are infinitely many guesses for Eve that are greater than $m$.
	\item For both protocols, only Bob can tell which elements sent in the form of $h$ belong to $H$. For the second protocol, the random number generator and the seed are known to only Alice and Bob, so there is no way for Eve to tell which elements among $\{h,a_i\}$ belong to $H$ to try to generate $H$.
\end{itemize}

\subsection{The protocol II: basic idea}
Let $G =\langle S|R_1\rangle $ be a secret group that is only known only to Alice and Bob and that has polynomial geodesic length problem.  Let $H =\langle T \rangle $ be a  public distorted subgroup of $G$. Here $T$ is a subset of $S$.
\begin{enumerate}
	\item Alice wants to send a message $n\in \mathbb{N} $ to Bob. She picks $g\in G$ with $\ell_G(g)=n$. She then expresses $g =t_1 t_2 t_3 \cdots  t_m$, where $m \gg n$ and $t_i's \in T$ and sends to Bob.
	\item Bob converts $g$ back in terms of generators of $G$ and by assumption computes its length in polynomial time to recover $n$.
\end{enumerate}

\subsubsection{Security}
Although $G$ is not known to anyone except Alice and Bob, the security of this scheme is not strong since the eavesdropper could potentially guess the value of $n$ based on the upper bound $m$.

\subsection{The protocol II: secure version} Instead of sending $g$ with $\ell_G(g)=m \gg n$ we can embed $H$ exp(exp) distorted in another group $K$ so that we can transmit message of size $\leq \log n < m$. 
For the cryptosystem below, we need the following groups:
$$G=\langle {g_1}, \cdots ,g_l |R_G \rangle$$
$$H= \langle h_1, h_2, \cdots, h_k |R_H \rangle $$
$$K=\langle {k_1}, \cdots ,k_q |R_K \rangle$$
where $H$ is a distorted subgroup of $G$ and embedded exp(exp) distorted in $K$. The group $H$ is known to the public whereas $G$ and $K$ are known to only Alice and Bob.
\begin{enumerate}
	\item Alice wants to send a message $n\in \mathbb{N} $ to Bob.
	 She picks $g\in G$ with $\ell_G(g)=n$, $g=g_1g_2 \cdots g_n$.\\
	 Since $H$ is distorted in $G$, there is $m > n$ with $\ell_H(g)=m$. Alice then expresses $g =h_1 h_2\dots h_m$, in terms of generators of $H$.\\
	 Since $H$ is embedded exp(exp) distorted in $K$, there exist $p \ll \ll m$ and $k_1, k_2, \cdots k_p$ in the generating set of $K$ such that $g = k_1k_2 \cdots k_p$.\\
	 Alice sends $g$ in this form to Bob.
	\item Bob will do the following:\\
	 He uses his knowledge of $K$ and $H$ and the fact $H$ is exp(exp) in $K$ to convert $g = k_1k_2 \cdots k_p (p \ll m)$ to $g = h_1 h_2\dots h_m$.\\
	 Since he knows that $H$ is distorted in $G$, he converts $g =h_1 h_2\dots h_m$ to $g= g_1 g_2\dots g_n$ back in terms of generators of $G$.
	 He then computes the length of $h$ to recover $n$.
\end{enumerate}

\subsubsection{Security} The security of the scheme relies on the fact that finding the geodesic length problem in $H$ for the eavesdropper is impossible due to the fact that:
\begin{itemize}
	\item $G$ and $K$ are not known to anyone except to Alice and Bob. 
	\item $g$ is sent in terms of generators of $K$ so there is no way for Eve to figure out $H$.
	\item With $\ell_K(g)=p \ll n$, there are infinitely many choices of numbers greater than $p$ for guessing.
\end{itemize}
\section{Possible platforms}\label{cex}
For both protocols, Gromov hyperbolic groups seem to provide interesting platforms. Indeed, according to Theorem \ref{pol} the geodesic length problem is solvable in polynomial time.  There are many examples of hyperbolic groups with exponentially distorted hyperbolic subgroups, see for instance \cite{MJ} for geometric examples such as surface subgroups in fundamental groups of hyperbolic 3-manifolds, but we do not know about membership search problems there.
\subsection{Free-by-cyclic platforms for protocol I} One possible weakness of Protocol I is that the public group $G$ does not contain enough exponentially distorted subgroups $H$, so Eve could make a group theoretic search and find all the distorted subgroups. To avoid that problem, one could use hyperbolic groups which can be written as free-by-cyclic groups in infinitely many ways.  Such groups are constructed in~\cite{MM}.  More precisely, the authors construct groups $G$ which are hyperbolic, and have infinitely many homomorphisms to $\Z$, with free kernel.  Given any such homomorphism, one has an expression 
$G= F_n(a_1, \dots, a_n) \rtimes_\phi \langle t \rangle$, where the first factor is the free group on the generators $a_1, \dots, a_n$, and $\phi$ is an automorphism of $ F_n(a_1, \dots, a_n)$ such that 
$t a_i t^{-1} = \phi(a_i)$ for all $i$.  

We fix one such $G= F_n(a_1, \dots, a_n) \rtimes_\phi \langle t \rangle$ (including a choice of generators $a_1, \dots, a_n, t$) as the public group $G$.  

Now Alice and Bob together choose one of the (infinitely many) other homomorphisms of $G$ to $\Z$, say $G= F_m (b_1, \dots, b_m) \rtimes \langle s \rangle$ and take $H = F_m(b_1,\dots, b_m) <G$.  
\begin{lemma}\label{pallavi}The membership search problem for $H<G$ is solvable in polynomial time. \end{lemma}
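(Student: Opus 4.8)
The plan is to exploit the defining feature of $H$: it is the kernel of the homomorphism $\psi\colon G\to\Z$ determined by the chosen splitting $G=F_m(b_1,\dots,b_m)\rtimes_\sigma\langle s\rangle$, where $\psi(s)=1$ and $\psi(b_j)=0$, and $\sigma$ is the automorphism of $F_m$ with $s b_j s^{-1}=\sigma(b_j)$. Every element of $G$ has a unique normal form $w\,s^{k}$ with $w\in F_m$ and $k=\psi(g)\in\Z$, and membership in $H$ is exactly the condition $k=0$. The algorithm will therefore consist in computing this normal form of the given element and reading off its $F_m$-part as a word in $T=\{b_1,\dots,b_m\}$.

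Concretely, suppose $h$ is presented as a word $x_1\cdots x_L$ of length $L=\ell_G(h)$ in the public generators $a_1,\dots,a_n,t$. First I would substitute, using the fixed rewriting rules that express each $a_i$ and $t$ as a word in $b_1,\dots,b_m,s$; these rules are part of the data shared by Alice and Bob (who together chose the second splitting) and are of bounded size, so this step costs only a constant factor and yields $h$ as a word $h'$ in the letters $b_j^{\pm1},s^{\pm1}$ of length $O(L)$. Next I would put $h'$ into normal form by collecting all occurrences of $s$ to the right: scanning left to right and maintaining a current form $w\,s^{k}$, an incoming letter $s^{\pm1}$ merely changes $k$ to $k\pm1$, while an incoming letter $b_j^{\pm1}$ replaces $w$ by the freely reduced product $w\,\sigma^{k}(b_j^{\pm1})$. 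Since $h\in H$ we have $\psi(h)=0$, so $k$ returns to $0$ at the end and the output is $w\in F_m=H$, a reduced word in $T$ whose length is exactly $\ell_H(h)=\ell_T(h)$.

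Every ingredient of this collection step is effective: $\sigma(b_j)$, $\sigma^{-1}(b_j)$, and hence each $\sigma^{k}(b_j)$, are obtained by iterating the fixed finitely described automorphism $\sigma$, and free reduction in $F_m$ is linear in the length of the word. The only delicate point --- and the main obstacle --- is the bookkeeping of length: because $H$ is exponentially distorted, the output word $w$, and already the intermediate words $\sigma^{k}(b_j)$, can be exponentially long in $L$. This is unavoidable but harmless, since the quantity Bob must recover is precisely this length $N=\ell_H(h)$; what one proves is that, for the inputs occurring in the protocol --- where $h$ is presented by its $G$-geodesic, so that $L=\ell_G(h)$ and $N$ is comparable to the distortion bound $\lambda^{L}$ --- the procedure runs in time polynomial in $N$. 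To see this one keeps $w$ freely reduced at every step, so that no intermediate word is substantially longer than the final answer; with $|k|\le L$ throughout, each of the $O(L)$ pieces $\sigma^{k}(b_j)$ has length at most $\lambda^{L}\approx N$ for the expansion constant $\lambda$ of $\sigma^{\pm1}$, and summing the per-step cost bounds the total work by a polynomial in $N$. Since $L=\ell_G(h)\ll N$ forces $L=O(\log N)$, the rewriting is comfortably polynomial in the size of the recovered message.
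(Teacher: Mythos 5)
Your algorithm is essentially the paper's own proof: both first rewrite the input word into the alphabet $\{b_1^{\pm1},\dots,b_m^{\pm1},s^{\pm1}\}$ at a constant-factor cost (using that each public generator has a fixed expression in the second splitting) and then eliminate the stable letter by applying the defining automorphism, your left-to-right collection of $s$'s into the normal form $w\,s^{k}$ being just a reorganized version of the paper's repeated removal of innermost $s\,x\,s^{-1}$ pairs guaranteed by Britton's lemma, with the same $O(L)$ bound on the number of eliminations. Your concern about exponentially long intermediate words is legitimate and the paper's proof silently shares it (it asserts each automorphism application is ``polynomial'' without tracking word growth), and your resolution --- measuring time against the recovered length $N=\ell_H(h)$ rather than the input length --- is the intended reading, though note that your assertion $N\approx\lambda^{L}$ holds only for distortion-realizing inputs; in general distortion gives only the upper bound $N\lesssim\lambda^{L}$, and the honest statement for both your argument and the paper's is that the running time is polynomial in the length of the (unreduced) output word over $\{b_1,\dots,b_m\}$.
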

\begin{proof}
Given a word $w = w(a_1, \dots, a_n, t)$ a word in the public generators $a_1, \dots, a_n, t$ of $G$, which represents an element $h\in H$, we need to show that there is a polynomial time algorithm to write $h$ in terms of the generators 
$b_1, \dots, b_m$ of $H$. 

Since $G= F_n(a_1, \dots, a_n) \rtimes_\phi \langle t \rangle = F_m (b_1, \dots, b_m) \rtimes \langle s \rangle$, each $a_i$ can be written as a word in $b_1, \dots, b_m, s$.  Thus by hyperbolicity, $w$ can be changed into a word $v=v(b_1, \dots, b_m, s)$ in real time, and 
there is a constant $K$, depending only on $H$, such that $|v| \le K|w|$.  

The word $v$ may have some powers of $s$ and $s^{-1}$, but since it represents the element $h$ of $H$, it has an expression $u$ which is a word in just $b_1, \dots b_m$.  Applying 
Britton's lemma to $u^{-1}v$, we see that $v$ must have an innermost $s, s^{-1}$ pair: i.e., $v$ must have a subword  of the form $sxs^{-1}$ or $s^{-1}x s$, where $x$ is a word in  just $b_1, \dots b_m$.  
Replace this subword with $\phi(x)$ or $\phi^{-1}(x)$ respectively, to get a word $v_1$ representing $h$
with fewer $s$'s and $s^{-1}$'s than $v$.  Continuing this procedure, after finitely many  steps  
we will have written down an expression for $h$ in terms of $b_1, \dots, b_m$.  Applying the automorphism involves multiplication in the group, which is polynomial.  Moreover, the 
number of steps is bounded above by the number of $s, s^{-1}$ pairs, which is 
at most $|v|/2 < K|w|$.  
\end{proof}
\subsection{Exponential and Exp(exp) distortion for protocol II}We now provide concrete examples of hyperbolic groups with an exponentially and an exp(exp) distorted subgroup that could be used in protocol II (improved version). Those examples are a particular case of the more general techniques developed in \cite{BBD}. Here we describe a specific type which may fit our needs, although it is not clear that they have a fast enough membership search problem.\\

Let $$G_1:=\langle a_1, a_2, \cdots , a_{14}, t_1 | t_1^{-1}a_jt_1=w_{1j}   (1\le j \le 14) \rangle$$ 
and 
$$G_{14}:= \langle  a_1, a_2,  \cdots , a_{14^2}, t_1,  \cdots , t_{14} | t_i^{-1}a_j t_i = w_{ij}   (1 \le i \le 14, 1\le j \le 14^2) \rangle$$
where $w_{1j}$'s are positive words on $a_j$'s, of length 14 such that $a_i a_j$ appears at most once as a subword of $w_{1j}$ and similarly for $w_{ij}$. We obtain $w_{1j}$ by noting that the following word 
$$(a_1a_1a_2a_1a_3a_1 \cdots a_{14})(a_2a_2a_3a_2  \cdots a_{14}) \cdots (a_{13}a_{13}a_{14})a_{14}$$ 
has length $14^2$ so we can split it into $14$ subwords of length $14$, each corresponding to $w_{1j}$.
\subsection{Exponentially Distorted Subgroups}
The subgroup $$F_1:= \langle a_1, \cdots, a_{14}\rangle $$ is free of rank $14$ and is exponentially distorted in $G_1$. 

Here is an example. The word $t_1^{-n}a_1t_1^n$ has length $2n+1$ in $G_1$. On the other hand, 

\begin{eqnarray*}
t_1^{-n}a_1t_1^n &=& t_1^{-n+1}t_1^{-1}a_1t_1t_1^{n-1}
            = t_1^{-n+1}w_{11}t_1^{n-1} \\
                 &=& t_1^{-n+1}a_1a_1a_2a_1  \cdots a_7a_1 t_1^{n-1} \\
&=& t_1^{-n+2}t_1^{-1}a_1t_1t_1^{-1}a_2 \cdots t_1^{-1}a_7t_1t_1^{-1}a_1t_1t_1^{n-2} \\
&=& t_1^{-n+2}w_{11}w_{11}w_{12} \cdots w_{17}w_{11}t_1^{n-2} \\
&=&  \cdots a^{(1)}_j \cdots
\end{eqnarray*}

Since $l_{G_1}(t_1^{-n}a_1t_1^n)=2n+1 $ and $l_{F_1}(t_1^{-n}a_1t_1^n)= 14^n$, the subgroup $F_1$ is at least exponentially distorted in $G_1$. 

\subsection{Exponentially Exponentially Distorted Subgroups}
Define $H:=G_1 *_{F_1} G_{14}$. 
Denote $$F_{1}:= \langle  a^{(1)}_1,  \cdots, a^{(1)}_{14}\rangle$$ 
and 
$$F_{2}:= \langle a^{(2)}_1,  \cdots , a^{(2)}_{14}, \cdots , a^{(2)}_{14^2} \rangle.$$

Let $w_{1}=t^{-n}a^{(1)}_1t^n$ and
\begin{eqnarray*}
w_{2}&=&w_{1}^{-1}a^{(2)}_1w_{1} \text{ where } {a_1^{(2)} \in F_2} \text{ and } {w_1 \in F_1}\\
	&=&(t^{-n}a^{(1)}_1t^n)^{-1}a^{(2)}_1t^{-n}a^{(1)}t^n \in H\\
	&=&(\cdots a^{(1)}_j \cdots)^{-1}a^{(2)}_1(\cdots a^{(1)}_j \cdots)\\
	&=&	(\cdots t_j \cdots)^{-1}a^{(2)}_1(\cdots t_j \cdots)\\
	&=&= \cdots a^{(2)}_k \cdots
\end{eqnarray*} 
where the third equality follows by the previous computation and the fourth equality follows since $F_1$ is identified with the subgroup of $G_{14}$ generated by $\langle  t_1, \cdots, t_{14} \rangle $. There are $14^n$ elements in each of $(\cdots a^{(1)}_j \cdots)$ so $14^n$ $t_j$'s on each side of $a^{(2)}_1$. Since $l_H(w_2)=4n+2 $ and $l_{F_2}(w_2)= 14^{14^n}$, $F_2$ is at least exp(exp) distorted in $H$.
\section*{Acknowledgements} The authors thank Pallavi Dani for conversations on an earlier draft and for pointing out free-by-cyclic groups as a possible platform as well as Lemma \ref{pallavi}.
Indira Chatterji is partially supported by the Institut Universitaire de France (IUF).  
Delaram Kahrobaei is partially supported by a PSC-CUNY grant from the CUNY Research Foundation, the City Tech Foundation, and ONR (Office of Naval Research) grant N00014-15-1-2164. Delaram Kahrobaei has also partially supported by an NSF travel grant CCF-1564968 to IHP in Paris.


\begin{thebibliography}{99}

\bibitem{BH} M. Bridson and A. Haefliger, \emph{Metric Spaces of Non-Positive Curvature.} Springer, Grundlehren der mathematischen Wissenschaften, 1999.

\bibitem{Elder} M. Elder,  \emph{A linear time algorithm to compute geodesics in solvable Baumslag-Solitar groups.} Illinois Journal of Mathematics 54 (2010) Number 1 pages 109--128.

\bibitem{ER}  M. Elder, A. Rechnitzer, \emph{Some geodesic problems for finitely generated groups.} Groups, Complexity, Cryptology 2 (2010) Issue 2, 223--229.

\bibitem{Ep} D. B. A. Epstein, J. W. Cannon, D. F. Holt, S. V. F. Levy, M. S. Paterson, and W. P. Thurston, \emph{Word processing in groups.} Jones and Bartlett Publishers, 1992.


\bibitem{MM} T. Mecham and A. Mukherjee. \emph{Hyperbolic groups which fiber in infinitely many ways.} Algebraic \& Geometric Topology 9 (2009) 2101--2120.


\bibitem{MJ} Mahan Mitra, \emph{Coarse extrinsic geometry: a survey.} Geometry and Topology Monographs Volume 1: The Epstein birthday schrift (1998), 341--364.

\bibitem{MRUV}A. Myasnikov, V. Romankov, A. Ushakov, A.Vershik. \emph{The Word and Geodesic Problems in Free Solvable Groups}, Transactions of the American Mathematical Society 362 (9), (2010), 4655--4682.

\bibitem{BBD} J. Barnard, N. Brady, P. Dani. \emph{Super-Exponential Distortion of Subgroups of CAT(-1) Groups}, Algebraic $\&$ Geometric Topology, 7 (2007), 301--308.

\bibitem{MSU2}
A. G. Myasnikov, V. Shpilrain, and A. Ushakov, {\sl Non-commutative cryptography and complexity of group-theoretic problems}, Amer. Math. Soc. Surveys and Monographs, 2011.
\bibitem{SZ}V. Shpilrain and G. Zapata. \emph{Using the subgroup membership search problem in public key cryptography}. Groups, Complexity, and Cryptology 1 (2009), 33--49.

\end{thebibliography}
\end{document}